\documentclass[12pt]{article}
\usepackage{graphicx}
\usepackage{titling}
\usepackage{fancyhdr}
\usepackage{amsthm}
\usepackage{lipsum}
\usepackage{amsfonts, amsmath}
\usepackage{hyperref}
\usepackage[margin=1.25in]{geometry}
\usepackage{tikz}
\usepackage{setspace}

\usepackage{titlesec}

\titleformat{\section}
  {\normalfont\centering\large}
  {\thesection}{1em}{}

\titleformat{\subsection}
  {\normalfont\normalsize\itshape}
  {\thesubsection}{1em}{}

\usepackage[
    backend=biber,
    style=alphabetic
]{biblatex}
\renewbibmacro{in:}{}
\usepackage{float}
\usepackage[normalem]{ulem}
\usepackage{verbatim}

\newtheorem{thm}{Theorem}[section]

\newtheorem{coro}[thm]{Corollary}
\newtheorem{lem}[thm]{Lemma}

\newtheorem{remk}[thm]{Remark}

\newcommand{\Hbb}{ {\mathbb H}}

\title{Hot spots in convex hyperbolic planar domains with small eigenvalues}
\author{Lawford Hatcher}
\date{}

\begin{document}

\maketitle

\begin{abstract}
    We prove a variant of Rauch's hot spots conjecture for hyperbolic planar domains with small Neumann or mixed Dirichlet-Neumann eigenvalues. We conclude, for instance, that on bounded convex domains in the hyperbolic plane with sufficiently large area, second Neumann Laplace eigenfunctions have no interior critical points. 
\end{abstract}

\section{Introduction}\label{intro}

Let $\Hbb^2$ denote the hyperbolic plane. Given a domain $\Omega\subseteq \Hbb^2$, let $-\Delta$ denote the Neumann Laplacian acting on $L^2(\Omega)$. 

\begin{thm}\label{neumann14}
    Suppose that $\Omega\subseteq \Hbb^2$ is a finite-area, geodesically convex domain. If $\mu\in (0,1/4]$ is the least element of the positive spectrum of $-\Delta$, then any Neumann eigenfunction corresponding to $\mu$ has no interior critical points.
\end{thm}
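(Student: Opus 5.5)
Suppose, for contradiction, that $u$ is a Neumann eigenfunction for $\mu$ with an interior critical point $p\in\Omega$. The plan is to differentiate $u$ along Killing fields of $\Hbb^2$ and use two facts about the resulting functions. First, they are again eigenfunctions with eigenvalue $\mu$. Second, on a convex domain their Rayleigh quotients are at most $\mu$. We then play this off against two spectral facts: $\mu$ is the bottom of the positive Neumann spectrum, and every Dirichlet eigenvalue of a domain in $\Hbb^2$ is strictly larger than $1/4$ (McKean's inequality).

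\textbf{Step 1: choosing the Killing field.} Let $\mathfrak k$ be the $3$-dimensional space of Killing fields on $\Hbb^2$. For $X\in\mathfrak k$, the function $v_X=Xu=du(X)$ satisfies $-\Delta v_X=\mu v_X$ in $\Omega$, because isometries commute with $\Delta$. Since $du(p)=0$, we have $v_X(p)=0$ for every $X$. The map $X\mapsto d(v_X)(p)\in T_p^*\Hbb^2$ is linear from a $3$-dimensional space to a $2$-dimensional one, so there is a nonzero $X$ with $d(v_X)(p)=0$. Fix such an $X$ and write $v=v_X$. We must check that $v\not\equiv 0$. If $v\equiv0$, then $u$ would be invariant under the one-parameter isometry group generated by $X$. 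This is incompatible with $\Omega$ being bounded convex with $u$ nonconstant, except when $X$ is a rotation and $\Omega$ is a geodesic disc centered at the fixed point; that radial case is handled directly by ODE analysis, where the first nonconstant Neumann mode is not radial. So $v$ vanishes to order at least $2$ at $p$. By the local structure of nodal sets of solutions of $-\Delta v=\mu v$ (Bers/Cheng), at least four nodal arcs emanate from $p$.

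\textbf{Step 2: the boundary term.} On $\partial\Omega$, differentiate the Neumann condition $\partial_n u=0$ tangentially and use that $X$ is Killing. This should express $\partial_n v$ in terms of $v$, the geodesic curvature $\kappa\ge 0$ of $\partial\Omega$, and the normal component $\langle X,n\rangle$. Integrating by parts on a nodal domain $D$ of $v$, I then want
\[
\int_D |\nabla v|^2 \;=\; \mu\int_D v^2 \;-\; \int_{\partial D\cap\partial\Omega}(\text{boundary term}),
\]
where convexity ($\kappa\ge0$) gives the boundary term a sign, so that $\int_D|\nabla v|^2\le\mu\int_D v^2$. I expect this computation to be the main obstacle. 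The field $X$ is not tangent to $\partial\Omega$, so $\partial_n(Xu)$ picks up both a Hessian term and a term coming from the normal part of $X$. The first thing I would try is to use $\partial_n u=0$ to rewrite $v$ on $\partial\Omega$ as $\langle X,T\rangle\,\partial_T u$, with $T$ the unit tangent, and then reduce everything to $\kappa\,(\partial_T u)^2$ terms. If that fails to give a sign, I would instead use the Bochner/Reilly formula for $|\nabla u|^2$ on $D$, in the style of Payne's and Jerison's convex-domain arguments.

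\textbf{Step 3: counting nodal domains.} Suppose some nodal domain $D$ of $v$ satisfies $\overline D\subset\Omega$. Then $v|_D$ is a Dirichlet eigenfunction with eigenvalue $\mu\le 1/4$. This contradicts the strict bound $\lambda_1^D(D)>1/4$ valid for every bounded domain in $\Hbb^2$. Hence every nodal domain meets $\partial\Omega$. The four nodal arcs from $p$ cannot close up into loops, since a loop would enclose an interior nodal domain. So they run out to $\partial\Omega$, and $v$ has at least three (in fact four) nodal domains $D_1,\dots,D_k$ with $k\ge3$. The restrictions $v\,\mathbf 1_{D_i}$ span a $k$-dimensional space of $H^1$ functions, and by Step 2 each has Rayleigh quotient at most $\mu$. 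Since $k\ge 3$, a $2$-dimensional subspace of this span is orthogonal to constants, and every element of it has quotient at most $\mu$. Because $\mu$ is the least positive Neumann eigenvalue, the min-max principle forces every such combination to be a genuine Neumann eigenfunction. But a combination supported on only some of the $D_i$ (possible when $k\ge3$) vanishes on an open set, which contradicts unique continuation. This shows $u$ has no interior critical point.
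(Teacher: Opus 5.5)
\textbf{There is a genuine gap in Step 2, and it is the heart of your argument.} Nothing in the proposal establishes $\int_D|\nabla v|^2\le\mu\int_D v^2$ on the nodal domains $D$ of $v=Xu$, and in general this inequality fails. Carry out the computation you propose. Let $T$ be the unit tangent, $s$ arclength, and $\kappa\ge0$ the geodesic curvature of $\partial\Omega$. The Neumann condition gives $v=\langle X,T\rangle\,\partial_s u$, $\nabla^2u(T,n)=-\kappa\,\partial_s u$ and $\nabla^2u(n,n)=-\mu u-\partial_s^2u$. Hence
\[
v\,\partial_n v=-\kappa\,\langle X,T\rangle^2(\partial_s u)^2+\langle X,T\rangle\,\partial_s u\,\bigl(\partial_s u\,\langle T,\nabla_n X\rangle-\langle X,n\rangle(\mu u+\partial_s^2u)\bigr).
\]
Only the first term is signed by convexity. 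The second term involves $u$ and $\partial_s^2u$ along $\partial\Omega$ and has no sign, so restricting to the pieces $\partial D\cap\partial\Omega$ gives you nothing.

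The Bochner/Reilly fallback does not help. It signs $\tfrac12\partial_n|\nabla u|^2=-\kappa(\partial_s u)^2$, which in the Euclidean case is the \emph{sum} of $v_i\,\partial_n v_i$ over two orthogonal translations. It says nothing about a single $v$ on its own nodal domains, and $\Hbb^2$ has no parallel frame of Killing fields anyway.

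A telling symptom: your Step 2 never uses $\mu\le 1/4$. In the Euclidean analogue, the other use of that hypothesis (excluding interior nodal domains) is automatic from $\lambda_1^D(D)>\lambda_1^D(\Omega)>\mu_2(\Omega)$. So if Step 2 held as stated, the same three steps would give a few-line proof that second Neumann eigenfunctions of every convex planar domain have no interior critical points. That is a strong sign this is not a viable mechanism.

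\textbf{The missing idea is to compare $u$ with a radial solution instead of differentiating it.}
\begin{itemize}
\item First show $u(p)\neq0$. The nodal set of $u$ contains no loops, by the $1/4$ bound exactly as in your Step 3. A nodal crossing would then produce four nodal domains, contradicting Courant.
\item With $u(p)>0$, set $w=u(p)J_{p,\mu}-u$, where $J_{p,\mu}$ is the radial solution of $-\Delta J=\mu J$ with $J(p)=1$. For $\mu\le1/4$, $J$ is positive: a zero would give a compactly supported test function on $\Hbb^2$ with Rayleigh quotient $\mu\le1/4$. Hence $J$ is radially decreasing.
\item Then $w$ has a nodal critical point at $p$. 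On $\partial\Omega$ the Neumann condition kills the $u$-part, so $\partial_\nu w=u(p)\,\partial_\nu J<0$, because convexity gives $\partial_\nu r>0$ for $r=d(p,\cdot)$.
\item Thus $w\,\partial_\nu w\le 0$ pointwise on the boundary of every nodal domain where $w>0$. There are at least two such domains, since signs alternate around $p$.
\end{itemize}
Your Step 3 then finishes, using only these two positive nodal domains: any combination of them vanishes on the negative ones, contradicting unique continuation.

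\textbf{A smaller point: your justification that $v\not\equiv0$ is wrong.} Take a convex ``Fermi rectangle'' $\{|\sigma|<b,\ 0<\tau<a\}$ about a geodesic $\gamma$, with $\sigma$ the signed distance to $\gamma$. The Neumann eigenfunction $f(\sigma)$ is invariant under translation along $\gamma$. You could sidestep this by taking $X$ to be the rotation about $p$. It lies in your kernel automatically, since $d(v_X)(p)=\nabla^2u(p)(X(p),\cdot)$.
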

\indent The threshold $1/4$ arises naturally as the bottom of the spectrum of the Laplacian acting on the hyperbolic plane. Below this threshold, there exist smooth solutions to $-\Delta u=\mu u$ on $\Hbb^2$ that are globally radially decreasing. We use these functions to construct test functions that allow us to prove Theorem \ref{neumann14} via contradiction.\\
\indent We explain in Section \ref{exsection} below that an ideal $n$-gon $\Omega$ in the hyperbolic plane with at least $18$ vertices has a positive Neumann eigenvalue under $1/4$. Theorem \ref{neumann14} implies that an eigenfunction corresponding to the smallest such eigenvalue has critical points only in $\partial\Omega$.\\
\indent Another application of Theorem \ref{neumann14} is that large but bounded convex hyperbolic planar domains have no interior critical points.
\begin{coro}\label{largedomains}
    There exists $A>0$ such that if $\Omega\subseteq \Hbb^2$ is a bounded convex domain with area at least $A$, then a second Neumann eigenfunction on $\Omega$ has no interior critical points. The constant $A$ may be taken to be the unique value for which the hyperbolic disk of area $A$ has second Neumann eigenvalue equal to $1/4$.
\end{coro}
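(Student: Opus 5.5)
The plan is to reduce Corollary \ref{largedomains} to Theorem \ref{neumann14}. It suffices to show that for a bounded convex $\Omega\subseteq\Hbb^2$ with area at least $A$, the second Neumann eigenvalue satisfies $\mu_2(\Omega)\le 1/4$. A bounded domain has discrete Neumann spectrum, and $\mu_2(\Omega)$ is exactly the least positive eigenvalue. Theorem \ref{neumann14} then applies directly to any corresponding eigenfunction.

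\textbf{Step 1: Szeg\H{o}--Weinberger in $\Hbb^2$.} I would invoke the hyperbolic Szeg\H{o}--Weinberger inequality (due to Ashbaugh--Benguria, extending Chavel's treatment). Among domains in $\Hbb^2$ of fixed area, the geodesic disk maximizes the first positive Neumann eigenvalue. Thus, writing $B_a$ for the hyperbolic disk of area $a$, we have $\mu_2(\Omega)\le \mu_2(B_{|\Omega|})$. Convexity is not needed for this inequality, only boundedness and enough regularity; convex domains are Lipschitz, which suffices.

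\textbf{Step 2: monotonicity and limit for disks.} Next I would show that $a\mapsto \mu_2(B_a)$ is continuous and strictly decreasing, and that it tends to a limit below $1/4$ as $a\to\infty$. Strict decrease will come from rescaling arguments or from radial ODE comparison, as follows.

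The second Neumann eigenfunction on $B_R$ has the form $g(r)\cos\theta$, where $g$ solves
\[ g''+\coth r\, g' + \left(\mu - \frac{1}{\sinh^2 r}\right)g=0, \qquad g'(R)=0. \]
The eigenvalue $\mu_2(B_R)$ is the smallest $\mu$ for which the regular solution has $g'(R)=0$. As $R$ grows this critical $\mu$ decreases. For $\mu<1/4$ the regular solution grows like $e^{(-1/2+\sqrt{1/4-\mu})r}$ behaviour combined with the other exponent. For $\mu$ slightly below $1/4$ the solution is eventually monotone but has a first critical point at a finite radius that tends to infinity as $\mu\uparrow 1/4$. Hence $\mu_2(B_R)\to 1/4$ from above or from below.

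I expect this to be the main obstacle: determining on which side $\mu_2(B_R)$ crosses $1/4$. One route is a test function. Take $u=g_\mu(r)\cos\theta$ with $\mu<1/4$, where $g_\mu$ is increasing, in the spirit of the radial functions used for Theorem \ref{neumann14}. Truncating at a large radius should give a Rayleigh quotient below $1/4$, which shows $\mu_2(B_R)<1/4$ for large $R$. Small disks have $\mu_2(B_R)\approx j_{1,1}'^2/R^2>1/4$. By continuity and monotonicity there is then a unique $A$ with $\mu_2(B_A)=1/4$.

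\textbf{Step 3: conclude.} If $|\Omega|\ge A$, then
\[ \mu_2(\Omega)\le\mu_2(B_{|\Omega|})\le\mu_2(B_A)=1/4. \]
Since $\Omega$ is bounded and convex, Theorem \ref{neumann14} applies and shows that any second Neumann eigenfunction has no interior critical points. Uniqueness of $A$ follows from the strict monotonicity established in Step 2, which also makes precise the stated choice of $A$.
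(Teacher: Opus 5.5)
Your reduction (Theorem \ref{neumann14} plus the hyperbolic Szeg\H{o}--Weinberger inequality, then comparison with disks) is the same as the paper's, and Step 1 is fine. The gap is in Step 2. That step carries the remaining content, which the paper imports from Langford--Laugesen.

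Your ODE heuristic is wrong. The exponent you quote, $-\tfrac12+\sqrt{\tfrac14-\mu}$, is negative for every $\mu\in(0,\tfrac14]$, so the regular solution $g_\mu$ does not grow. It vanishes at $0$, is positive nearby, and tends to $0$ at infinity. Hence it has a first critical point at a finite radius $R(\mu)$. This radius tends to infinity as $\mu\downarrow 0$ (the $\mu=0$ solution is $\tanh(r/2)$, increasing everywhere), not as $\mu\uparrow\tfrac14$; at $\mu=\tfrac14$ it equals $r_0\approx 2.53$. So $\mu_2(B_R)\to 0$, not $\tfrac14$.

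In fact ``$\mu_2(B_R)\to\tfrac14$'' together with your asserted strict decrease would give $\mu_2(B_R)>\tfrac14$ for every $R$. Then no $A$ would exist, so Step 2 contradicts its own first sentence. The proposed test-function rescue also fails. $g_\mu$ is not increasing on $[0,\infty)$. On any $[0,R]$ where it is increasing, the boundary term $\pi\, g_\mu(R)g_\mu'(R)\sinh R\ge 0$ only shows that the Rayleigh quotient of $g_\mu\cos\theta$ is at least $\mu$. Nothing in your sketch shows it lies below $\tfrac14$.

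To repair the limit, use $u=\tanh(r/2)\cos\theta$, the coordinate $x_1$ of the Poincar\'e disk. It has mean zero on $B_R$, and by conformal invariance its Dirichlet energy is the Euclidean one, $\pi\tanh^2(R/2)<\pi$. Meanwhile $\int_{B_R}u^2=\pi\int_0^R\tanh^2(r/2)\sinh r\,dr\to\infty$. Together with Step 1 this proves the first sentence of the corollary.

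The second sentence needs strict monotonicity of $R\mapsto\mu_2(B_R)$: it says that the particular $A$ with $\mu_2(B_A)=\tfrac14$ works and is unique. You only assert this monotonicity. Rescaling is unavailable, since $\Hbb^2$ has no dilations, and Neumann eigenvalues are not domain monotone. Either cite Theorem 2 of \cite{langlaug2} as the paper does, or argue on the radial problem $(\sinh r\,g')'=\big(\tfrac{1}{\sinh r}-\mu\sinh r\big)g$, $g'(R)=0$. There the Hadamard variation formula gives
\[\frac{d\mu}{dR}=\frac{\big(\frac{1}{\sinh R}-\mu\sinh R\big)g(R)^2}{\int_0^R g^2\sinh r\,dr}.\]
Integrating the equation over $(0,R)$ with $g>0$ forces the decreasing factor $\tfrac{1}{\sinh r}-\mu\sinh r$ to become negative before $r=R$, so $d\mu/dR<0$.
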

\begin{remk}\label{numremk}
    Based on the second statement in Corollary \ref{largedomains}, numerics indicate that we may take $A=33.35$. See Section \ref{exsection} below for details.
\end{remk}
\indent We also prove a variant of Theorem \ref{neumann14} for the mixed problem. Let $D\subseteq \partial\Omega$ be a connected subarc of the boundary. Let $-\Delta_M$ denote the mixed Laplacian on $\Omega$ with Dirichlet boundary conditions on $D$ and Neumann boundary conditions on $\partial\Omega\setminus D$. The bottom of the spectrum of $-\Delta_M$ is strictly positive. We also prove

\begin{thm}\label{mixed14}
    Suppose that $\Omega\subseteq \Hbb^2$ is geodesically convex with finite area. If $\lambda\in(0,1/4]$ is the least element of the spectrum of $-\Delta_M$, then any mixed eigenfunction corresponding to $\lambda$ has no interior critical points. 
\end{thm}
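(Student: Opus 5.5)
Suppose $u$ is a mixed eigenfunction for the bottom eigenvalue $\lambda\le 1/4$ of $-\Delta_M$, and suppose $u$ has an interior critical point $p\in\Omega$. We will derive a contradiction by building a test function that lowers the Rayleigh quotient. Because $\lambda$ is the bottom of the spectrum of $-\Delta_M$, it is simple and $u$ can be taken positive in $\Omega$. So we cannot compare against a sign-changing function, as one would for a second Neumann eigenfunction. Instead we use the radial comparison functions mentioned after Theorem \ref{neumann14}.

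\textbf{Radial comparison function.} Fix $\lambda\in(0,1/4]$ and let $\phi$ be the radial solution of $-\Delta\phi=\lambda\phi$ on $\Hbb^2$ centered at $p$, normalized by $\phi(p)=1$. Write $\phi=f(r)$, where $r=d(p,\cdot)$. Then $f''+\coth r\, f'+\lambda f=0$ with $f'(0)=0$. For $\lambda\le 1/4$ this is the spherical function $P_{-1/2+i\sqrt{1/4-\lambda}}(\cosh r)$, which is positive and strictly decreasing for all $r>0$; this positivity is exactly what fails above $1/4$. Since both $u$ and $\phi$ solve the same equation and $\nabla u(p)=0$, we compare $u$ with $u(p)\phi$.

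\textbf{Difference function and nodal structure.} Set $w=u-u(p)\phi$. Then $-\Delta w=\lambda w$, $w(p)=0$ and $\nabla w(p)=0$, so $w$ vanishes to order at least two at $p$. By the local structure of eigenfunction nodal sets, at least two nodal curves of $w$ cross at $p$, and near $p$ they divide a neighborhood into at least four sectors. Next we show that the nodal domains of $w$ containing these sectors must reach $\partial\Omega$. A nodal domain compactly contained in $\Omega$ would carry a Dirichlet eigenvalue $\lambda$. By domain monotonicity this would exceed the bottom $\lambda$ of $-\Delta_M$, which is a contradiction. Unbounded ends of finite area need separate treatment, for instance by using the decay of $\phi$.

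\textbf{Boundary analysis.} Along the Neumann part, $\partial_\nu w=-u(p)\partial_\nu\phi=-u(p)f'(r)\,\partial_\nu r$. By geodesic convexity, $\partial_\nu r\ge0$ for the outward normal, so $\partial_\nu w\ge 0$ on $\partial\Omega\setminus D$. On $D$ we have $w=-u(p)\phi<0$. So $w$ is negative near $D$, and on each nodal domain $G$ where $w>0$ we get
\[
\int_G|\nabla w|^2=\lambda\int_G w^2+\int_{\partial G\cap\partial\Omega}w\,\partial_\nu w\ \ge\ \lambda\int_G w^2 .
\]
With more care the same computation applies to the negative domains. The aim is to glue restrictions of $w$ to two adjacent nodal domains meeting $\partial\Omega$ into a function vanishing on $D$, with Rayleigh quotient at most $\lambda$, that is not a multiple of $u$. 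This contradicts simplicity of $\lambda$. Alternatively, positivity of $u$ combined with a counting argument on the sectors at $p$ and the arcs of $\partial\Omega$ should force a nodal domain of $w$ that avoids $D$ and on which $\partial_\nu w$ has a favorable sign, and hence forces a contradiction via the variational characterization.

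\textbf{Main obstacle.} The hardest step is the topological counting argument. We must show that the four sectors at $p$ produce nodal domains whose boundary pieces on $\partial\Omega$ carry the correct sign of $w\,\partial_\nu w$, using only that $D$ is a single connected arc. We must also handle the inequality $\partial_\nu r\ge 0$ at non-smooth or ideal boundary points. I would first treat the case of smooth boundary, and then extend to general convex domains by approximating with smooth convex domains, using continuity of the eigenvalue and of the eigenfunctions.
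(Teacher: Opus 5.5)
Your setup matches the paper's: the radial solution $\phi$, the difference $w=u-u(p)\phi$ with a nodal critical point at $p$, and the sign of $\partial_\nu w$ on $\partial\Omega\setminus D$ from convexity and monotonicity of $\phi$. But the decisive step is missing, and the computation you do carry out points the wrong way. With your sign convention, $\partial_\nu w\ge 0$ on the Neumann part. So on a nodal domain $G$ with $w>0$ you obtain $\int_G|\nabla w|^2\ge\lambda\int_G w^2$. Every admissible function satisfies this, because $\lambda$ is the bottom of the spectrum, so it gives no contradiction. The useful domains are those with $w<0$: there $w\,\partial_\nu w\le 0$ on $\partial G\cap\partial\Omega$, so $w\chi_G$ has Rayleigh quotient at most $\lambda$. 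However, $w=-u(p)\phi<0$ on $D$, so one negative nodal domain has $D$ in its closure, and its restriction is not admissible. Your first plan, gluing two adjacent nodal domains into a function vanishing on $D$, also fails. Adjacent nodal domains have opposite signs, hence boundary terms of opposite signs, so the quotient of the glued function is not controlled. A two-dimensional test space is not needed anyway, since $\lambda$ is the bottom of the spectrum.

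The missing idea is the one your ``alternatively'' sentence gestures at, and it is exactly the paper's argument. (The paper takes $w=u(p)J-u$, so its signs are reversed from yours.) You need two \emph{distinct} nodal domains of $w$ on which $w<0$, each containing one of the negative sectors at $p$. This follows from two facts.

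First, no nodal domain of $w$ is relatively compact in $\Omega$. This is your domain-monotonicity remark; alternatively, as in the paper, extend by zero to $\Hbb^2$ and use $\int|\nabla v|^2>\frac14\int v^2$ together with $\lambda\le\frac14$.

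Second, $\Omega$ is simply connected. Suppose two negative sectors at $p$ were joined by a path inside one negative nodal domain. Closing that path through $p$ gives a Jordan curve in $\Omega$ that encloses a positive sector, so the positive nodal domain containing it would be relatively compact, contradicting the first fact.

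Now use that $D$ is connected and $w<0$ on $D$: $D$ lies in the closure of exactly one of these negative domains. Take another one, $G$. The function $w\chi_G$ vanishes on $D$ and has Rayleigh quotient at most $\lambda$, so it is a ground state that vanishes on an open set. This contradicts unique continuation, or simplicity since $u>0$. As written, your proposal leaves this step open as the ``main obstacle'', so it does not yet reach a contradiction.
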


\noindent In particular, as we show in Section \ref{exsection} below, for any bounded convex domain $\Omega\subseteq\Hbb^2$, there exists $\epsilon>0$ depending on $\Omega$ such that if $D$ is connected and has diameter at most $\epsilon$, then a corresponding first mixed eigenfunction has no interior critical points.\\
\indent Rauch conjectured in 1974 that a second Neumann eigenfunction on a bounded Lipschitz domain in Euclidean space has extrema only on the boundary. This is widely known as the \textit{hot spots conjecture}. See \cite{rauch} and \cite{banuelosburdzy} for a discussion of this problem. There have been many recent advancements made on this problem; for instance \cite{jm, newapproach, pont, yaoevenefcns, yao1}.\\
\indent We recently addressed the hyperbolic version of this conjecture for non-acute geodesic triangles in $\Hbb^2$ \cite{me4}. However, as we point out in \cite{me4}, hyperbolic triangles always have second Neumann eigenvalue exceeding $1/4$, so Theorem \ref{neumann14} does not imply the result in this setting.  \\
\indent To our knowledge, Miyamoto \cite{miyamoto1} was the first to apply the method of proof we employ here to partially address the hot spots conjecture. We also used this method to prove the Euclidean version of Theorem \ref{mixed14} in \cite{me3}. Rohleder \cite{rohledermiya} also recently made use of this sort of argument to prove a result related to the hot spots conjecture.

\section{Proof of main theorems}

The proofs of Theorems \ref{neumann14} and \ref{mixed14} are very similar to each other. Recall that the spectrum of the Laplacian $-\Delta_{\Hbb^2}$ acting on $L^2(\Hbb^2)$ equals the interval $[1/4,\infty)$ and consists entirely of essential spectrum. In particular, for any function $u$ in the Sobolev space $H^1(\Hbb^2)$, we have 
\begin{equation}\label{14eqn}
    \frac{\displaystyle\int_{\Hbb^2}|\nabla u|^2}{\displaystyle\int_{\Hbb^2}|u|^2}>\frac14,
\end{equation} where the norm of the gradient and the integrals are taken with respect to the Riemannian metric on $\Hbb^2$. We refer the reader to \cite{terras} for proofs of these well-known facts.\\
\indent Given any $\mu>0$ and a point $p\in \Hbb^2$, there exists a smooth real-valued function $J=J_{p,\mu}$ on $\Hbb^2$ that satisfies $-\Delta J=\mu J$ pointwise and that depends only on the geodesic distance to the point $p$. This function can be described explicitly in terms of the Legendre function of degree $s$, where $s$ is either solution to $-\mu=s(s-1)$:
\begin{equation*}
    J_{p,\mu}(x)=P_{-s}(\cosh(r)),
\end{equation*}
 where $r$ is the distance from $x$ to $p$. See Exercise 3.2.11 \cite{terras}. Note that $J(p)=1$.

\begin{lem}\label{legendre}
    If $\mu\in (0,1/4]$, then the function $J$ is positive everywhere, decreasing as a function of $r$, and has a critical point at $p$.
\end{lem}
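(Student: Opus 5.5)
We will work directly with the radial ODE rather than with special-function identities. In geodesic polar coordinates about $p$ the hyperbolic Laplacian acting on radial functions is $f''+\coth(r) f'$. So $J(r)$ is the solution of
\begin{equation*}
    J''+\coth(r)\,J'+\mu J=0,\qquad J(0)=1,\quad J'(0)=0,
\end{equation*}
which is regular at the origin. Smoothness of $J$ at $p$ together with radial symmetry forces $\nabla J(p)=0$, so $p$ is a critical point. Concretely, $J$ is an even function of $r$ near $0$, since $P_{-s}(\cosh r)$ is analytic in $\cosh r$. This disposes of the third claim.

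For monotonicity we write the equation in divergence form,
\begin{equation*}
    \bigl(\sinh(r)\,J'(r)\bigr)'=-\mu \sinh(r)\,J(r).
\end{equation*}
As long as $J>0$ on $[0,r]$, integrating from $0$ gives $\sinh(r)J'(r)=-\mu\int_0^r \sinh(t)J(t)\,dt<0$. Hence $J$ is strictly decreasing on any initial interval where it stays positive. It therefore suffices to prove that $J$ never vanishes: then $J'<0$ for all $r>0$ and $J>0$ everywhere.

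The main obstacle is positivity, and this is where $\mu\le 1/4$ enters. We would argue by Sturm comparison, using the substitution $J=(\sinh r)^{-1/2}w$. This removes the first-order term and gives
\begin{equation*}
    w''+\Bigl(\mu-\tfrac14+\tfrac{1}{4\sinh^2 r}\Bigr)w=0.
\end{equation*}
One checks the potential: $(\sinh^{-1/2})''+\coth(\sinh^{-1/2})'=\bigl(\tfrac14-\tfrac{1}{4\sinh^2r}\bigr)\sinh^{-1/2}$, up to sign conventions, and we would verify this carefully.

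Suppose $J$, and hence $w$, had a first zero at $r_0>0$. Then $J$ would have to be positive on $[0,r_0)$. We then use the test-function argument suggested by (\ref{14eqn}), which we find cleaner. Let $B$ be the geodesic ball of radius $r_0$ about $p$ and let $u=J\cdot\mathbf 1_B$. Then $u\in H^1(\Hbb^2)$, because $J$ vanishes on $\partial B$. Integrating by parts on $B$ with $-\Delta J=\mu J$ gives
\begin{equation*}
    \int_{\Hbb^2}|\nabla u|^2=\mu\int_{\Hbb^2}u^2.
\end{equation*}
So the Rayleigh quotient equals $\mu\le 1/4$, contradicting the strict inequality (\ref{14eqn}). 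Hence $J$ has no zero, and it is positive everywhere.

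Combined with the monotonicity paragraph, $J'(r)<0$ for all $r>0$. This shows $J$ is decreasing in $r$, completing the lemma. The only points requiring care are the regularity of the radial solution at $r=0$, which justifies $J'(0)=0$ and the integration by parts near $p$, and the $H^1$ membership of the truncated function. Both are routine.
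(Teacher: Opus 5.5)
Your proof is correct. The critical-point and positivity steps are the paper's own: you truncate $J$ at a zero $r_0$ to get a function in $H^1(\Hbb^2)$ whose Rayleigh quotient is exactly $\mu\le 1/4$, contradicting (\ref{14eqn}).

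You differ from the paper only in the monotonicity step. The paper argues that once $J>0$, the identity $\Delta J=-\mu J<0$ makes $J$ strictly superharmonic. By the maximum principle $J$ then has no local minimum anywhere in $\Hbb^2$, including at $p$. A radial function with no local minima must be strictly decreasing in $r$. You instead integrate the divergence form $(\sinh r\,J')'=-\mu\sinh r\,J$ from $0$ to get $\sinh(r)J'(r)=-\mu\int_0^r\sinh t\,J(t)\,dt<0$. The only extra input is that $\sinh(t)J'(t)\to 0$ as $t\to 0$, which is immediate from smoothness.

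Your route is more explicit: it gives a formula showing $J'(r)<0$ for every $r>0$. The paper's route is coordinate-free and uses only the sign of $\Delta J$ and radial symmetry.

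The Liouville substitution and Sturm comparison paragraph is never used, so you can delete it. Its side check also has the sign reversed: with $f=(\sinh r)^{-1/2}$ one gets $f''+\coth r\,f'=\bigl(\tfrac{1}{4\sinh^2 r}-\tfrac14\bigr)f$. The transformed equation you display for $w$ is nonetheless correct.
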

\begin{proof}
    That $J$ has a critical point at $p$ follows from being smooth and radial.\\
    \indent Suppose that $J$ vanishes at some $r=r_0$. Let $u$ equal the function equal to $J$ on the geodesic disk of radius $r_0$ about $p$ and equal to $0$ elsewhere. Then $u\in H^1(\Hbb^2)$, and, by integration by parts, \[\frac{\displaystyle\int_{\Hbb^2}|\nabla u|^2}{\displaystyle\int_{\Hbb^2}|u|^2}=\mu\leq \frac14,\] contradicting (\ref{14eqn}). Thus, $J>0$ on $\Hbb^2$. By the maximum principle, $J$ therefore has no local minima. It follows that $J$ must decrease as a function of $r$.
\end{proof}

The proof of Theorem \ref{neumann14} primarily consists of showing that a second Neumann eigenfunction cannot have an interior critical point at which the function is nonzero. That the eigenfunction has no nodal critical points is well known, but we provide a proof in Lemma \ref{nodcrit} below. Recall that, by the work of Cheng \cite{cheng}, the nodal (i.e. zero-level) set of an eigenfunction is a union of smooth arcs that intersect each other transversely, and these intersections occur exactly at the set of nodal critical points (i.e. points at which $u=0$ and $\nabla u=0$ simultaneously) of the eigenfunction. 

\begin{lem}\label{nodcrit}
    If $u$ is a Neumann eigenfunction on a simply connected domain $\Omega\subseteq \Hbb^2$ whose eigenvalue $\mu\in (0,1/4]$ is the least element of the positive spectrum, then $u$ has no nodal critical points. 
\end{lem}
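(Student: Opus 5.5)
The plan is to combine Cheng's local description of the nodal set with two global facts: no nodal domain of $u$ can be cut off from $\partial\Omega$, and Courant's theorem. Suppose for contradiction that $p\in\Omega$ is a nodal critical point. By Cheng's result, near $p$ the nodal set consists of at least two smooth arcs crossing transversely at $p$. Hence at least four nodal half-arcs emanate from $p$, and the four (or more) local sectors around $p$ alternate in sign.

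\textbf{Step 1: no nodal domain is bounded away from $\partial\Omega$.} I claim every nodal domain $N$ of $u$ has boundary meeting $\partial\Omega$, in the sense that $\partial N\not\subseteq\Omega\cup\{\text{zero set}\}$ alone. Suppose instead that $\overline N\cap\partial\Omega$ is empty, or more generally that $u$ vanishes on all of $\partial N$ relative to $\Hbb^2$. Let $v$ be $u$ on $N$ and $0$ elsewhere on $\Hbb^2$. Then $v\in H^1(\Hbb^2)$: we have $u\in H^1(\Omega)$, $u$ vanishes on $\partial N$, and finite area causes no trouble since we only use that $u$ and $\nabla u$ are in $L^2$. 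Integrating by parts on $N$, where $-\Delta u=\mu u$ and the boundary term vanishes, gives
\[\frac{\int_{\Hbb^2}|\nabla v|^2}{\int_{\Hbb^2}|v|^2}=\mu\le\frac14,\]
which contradicts (\ref{14eqn}). This is exactly the argument of Lemma \ref{legendre}. In particular, the nodal set contains no closed loop in $\Omega$. Such a loop would bound, by simple connectivity, a region contained in $\Omega$ whose nodal domains are all bounded away from $\partial\Omega$.

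\textbf{Step 2: the nodal arcs from $p$ separate $\Omega$ into too many pieces.} Follow each of the $\ge4$ nodal half-arcs from $p$ through the nodal graph. By Step 1 none can return to $p$ or close up into a cycle, so each must exit to $\partial\Omega$ (or to the ideal boundary/infinity if $\Omega$ is unbounded). We then get a tree-like nodal graph containing four disjoint paths from $p$ to $\partial\Omega$. Since $\Omega$ is simply connected, a Jordan-curve argument shows these paths cut $\Omega$ into at least four components, one for each local sector at $p$. Each component contains at least one nodal domain. So $u$ has at least four nodal domains; in fact three would already suffice.

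\textbf{Step 3: contradiction with Courant.} Since $\mu$ is the least positive Neumann eigenvalue, Courant's nodal domain theorem, in its standard Neumann form, says $u$ has at most two nodal domains. To justify this, restrict $u$ to each nodal domain and extend by zero. This gives three or more $L^2$-orthogonal $H^1(\Omega)$ functions, each with Rayleigh quotient $\mu$. Some combination of them is orthogonal to constants, and if it had quotient $\mu$ it would be an eigenfunction vanishing on an open set, violating unique continuation. This contradicts Step 2.

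The main obstacle I expect is Step 2's topology in the unbounded finite-area case, for example ideal polygons. There, arcs may escape to ideal boundary points rather than to $\partial\Omega$, and Cheng's structure theorem has to be used globally. I would handle this by working in the disk model, where $\overline\Omega$ is a closed topological disk, so the Jordan-curve separation argument still applies. I would also check that the $H^1$ extension in Step 1 needs only $u\in H^1(\Omega)$.
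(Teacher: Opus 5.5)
Your proposal is correct and follows the paper's proof essentially step for step. You rule out nodal loops by extending $u$ by zero to contradict (\ref{14eqn}); your version also covers nodal domains on whose whole boundary $u$ vanishes, which is slightly more general. You then show the $\geq 4$ nodal half-arcs from $p$ reach $\partial\Omega$ at distinct points, use simple connectivity to get at least four nodal domains, and contradict Courant's theorem. Your sketch of the Neumann Courant bound works, provided you build the combination orthogonal to constants from just two of the restricted functions, so that it has Rayleigh quotient exactly $\mu$ and vanishes identically on the third nodal domain.
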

\begin{proof}
    We claim that the nodal set of $u$ does not contain a loop. Indeed, suppose to the contrary that $\Omega'$ is a subdomain of $\Omega$ bounded by a loop in the nodal set. Then the function equal to $u$ in $\Omega'$ and equal to $0$ in $\Hbb^2\setminus \Omega'$ defines a test function that contradicts (\ref{14eqn}), proving the claim.\\
    \indent If $u$ has a nodal critical point, then near this point, the nodal set of $u$ consists of at least two transversely intersecting arcs. By the claim, these arcs cannot form a loop and thus have at least four distinct endpoints in $\partial\Omega$. Since $\Omega$ is simply connected, $u$ has at least four nodal domains, contradicting Courant's nodal domain theorem. 
\end{proof}

\begin{proof}[Proof of Theorem \ref{neumann14}]
    Suppose toward a contradiction that $u$ is an eigenfunction as in the theorem statement with an interior critical point $p\in \Omega$. By Lemma \ref{nodcrit}, we may multiply $u$ by $-1$ if necessary to assume that $u(p)>0$. Let $J=J_{p,\mu}$ be as above, and define a function \[w(x)=u(p)J(x)-u(x).\] Then $w$ satisfies the equation $-\Delta w=\mu w$ pointwise in $\Omega$. Since $J(p)=1$ and has a critical point at $p$, the function $w$ has a nodal critical point at $p$. Since $\Omega$ has finite area and the function $J$ and its gradient are bounded, $w\in H^1(\Omega)$.\\
    \indent Near $p$, the nodal set of $w$ is therefore the union of at least two transversely intersecting arcs. By the same argument used in the proof of Lemma \ref{nodcrit}, the nodal set of $w$ does not contain a loop. Thus, the nodal arcs emanating from $p$ have at least four distinct endpoints in $\partial\Omega$ and partition $\Omega$ into at least four subdomains. On at least two of these subdomains $\Omega_1$ and $\Omega_2$, $w$ is positive. Let $u_i$ be the function equal to $w$ on $\Omega_i$ and equal to $0$ on $\Hbb^2\setminus \Omega_i$.\\
    \indent Let $q\in \partial\Omega_i\cap \partial\Omega$. Let $\partial_{\nu}$ denote the outward normal derivative. At $q$, by the Neumann boundary condition, we have \[\partial_{\nu}w=u(p)\partial_{\nu}J.\] Since $\Omega$ is convex and $J$ is decreasing as a function of $r$, we have \[\partial_{\nu}w(q)<0.\] Integration by parts then gives, for $i\in \{1,2\}$, \[\int_{\Omega}|\nabla u_i|^2=\mu\int_{\Omega}|u_i|^2+\int_{\partial\Omega_i\cap \partial\Omega}u_i\partial_{\nu}u_i\leq \mu\int_{\Omega}|u_i|^2.\] Since the $u_i$ have disjoint supports, they are $L^2(\Omega)$-orthogonal. The span of $u_1$ and $u_2$ provides a two dimensional space of functions on which the Rayleigh quotient is at most $\mu$. Since $\mu$ is the second Neumann eigenvalue, there exists a second Neumann eigenfunction in their span. However, any function in this span vanishes on an open set, contradicting unique continuation.
\end{proof}

\begin{proof}[Proof of Theorem \ref{mixed14}]
    The proof is almost identical to Theorem \ref{neumann14}. Note in this case that the eigenfunction can be taken strictly positive in $\Omega$. We again work by contradiction and construct a new function $w$ as above and use this to partition $\Omega$ into nodal domains of $w$. Since the eigenfunction $u$ vanishes on $D$ and $D$ is connected, $D$ is contained in the closure of a single nodal domain. There is thus another nodal domain whose closure does not intersect $D$ on which $w>0$. For the first mixed eigenvalue, we need to construct only a one-dimensional test space, so this other nodal domain yields the contradiction in this case. 
\end{proof}

\section{Examples}\label{exsection}

Here we explain the two examples mentioned in Section \ref{intro}. 

\subsection{Ideal polygons with many vertices}
Let $\Omega$ denote the interior of the convex hull of $n<\infty$ points in the ideal boundary $\partial_{\infty}\Hbb^2$. We call $\Omega$ an \textit{ideal polygon}, and we refer to the $n$ points in $\partial_{\infty}\Hbb^2$ as the \textit{vertices} of $\Omega$. We claim that ideal polygons have discrete Neumann spectrum in the interval $(0,1/4)$ and, if $n\geq 18$, then $\Omega$ has a Neumann eigenvalue in $(0,1/4)$. Since $\Omega$ is convex, Theorem \ref{neumann14} implies that a second Neumann eigenfunction has no interior critical points. \\
\indent First, note that the bottom of the spectrum of the Dirichlet Laplacian on $\Omega$ is greater than $1/4$. Otherwise, we could extend a corresponding eigenfunction to equal $0$ in $\Hbb^2\setminus \Omega$ to create a test function that contradicts (\ref{14eqn}).\\
\indent Let $S$ equal the hyperbolic surface obtained by gluing two copies of $\Omega$ together along their boundary. Then $S$ is a hyperbolic $n$-punctured sphere. This surface has discrete spectrum in the interval $[0,1/4)$ consisting only of eigenvalues. By Theorem 2 \cite{zograf}, $S$ has a Laplace eigenvalue in $(0,1/4)$. The eigenfunctions on $S$ can be decomposed into sums of eigenfunctions that are all even or odd with respect to the symmetry exchanging the two copies of $\Omega$. The even functions restrict to Neumann eigenfunctions on $\Omega$, and the odd functions restrict to Dirichlet eigenfunctions. Since $\Omega$ has no Dirichlet eigenvalues in $(0,1/4)$, it follows that $\Omega$ has a Neumann eigenvalue in this interval. 

\subsection{Large bounded convex domains}
Corollary \ref{largedomains} is a consequence of Theorem \ref{neumann14}, Chavel's extension of the Szeg\"o--Weinberger inequality to hyperbolic planar domains, and the fact that the second Neumann eigenvalue of hyperbolic disks tends to $0$ as the radius tends to infinity.
\begin{proof}[Proof of Corollary \ref{largedomains}]
    By Theorem \ref{neumann14}, it suffices to show that there exists $A>0$ such that for all bounded convex domains of area at least $A$, the second Neumann eigenvalue is at most $1/4$. To our knowledge, Chavel was the first to point out in \cite{chavel2} that Weinberger's proof \cite{weinberger} of the Szeg\"o-Weinberger inequality can be extended in a simple way to prove that among simply connected hyperbolic planar domains of the same area, the disk maximizes the second Neumann eigenvalue. Thus, it suffices to find a sufficient condition for a hyperbolic disk to have second Neumann eigenvalue at most $1/4$. Theorem 2 of \cite{langlaug2} implies that the second Neumann eigenvalue of a hyperbolic disk is strictly decreasing to $0$ in the radius. As the radius tends to $0$, the eigenvalue approaches positive infinity, so there exists a unique $r_0\in (0,\infty)$ for which the hyperbolic disk of radius $r_0$ has second Neumann eigenvalue equal to $1/4$. Thus it suffices to take $A=2\pi(\cosh(r_0)-1)$ equal to the area of this disk. 
\end{proof}  

The Neumann eigenvalues of a hyperbolic disk can be computed in terms of zeros of derivatives of associated Legendre functions via separation of variables. In particular, the radius $r_0$ in the proof of Corollary \ref{largedomains} above is the first positive zero of the derivative of the function \[r\mapsto P_{-1/2}^1(\cosh(r)).\] The numerical value of $r_0$ is approximately $2.52851$. Thus, \[A(r_0)=2\pi\big(\cosh(r_0)-1\big)\leq 33.35,\] establishing the claim in Remark \ref{numremk}.

\subsection{First mixed eigenfunctions with small Dirichlet region}
By a test function argument very similar to the one given in \cite{me3}, we show that, on a bounded convex domain $\Omega\subseteq \Hbb^2$, the first mixed eigenvalue tends to zero as the diameter of the Dirichlet region shrinks to zero. As a result, Theorem \ref{mixed14} implies that for $\epsilon$ sufficiently small, for any connected set $D\subseteq \partial\Omega$ of diameter at most $\epsilon$, a first mixed eigenfunction has no interior critical points.\\
\indent Let $\epsilon\in (0,1)$. Using polar coordinates in the Poincar\'e disk model of $\Hbb^2$, define a function \[u_{\epsilon}(r,\theta):=\begin{cases}
    0\;\;&\text{if}\;\;r<\epsilon\\[5pt]
    1-\ln(r/\sqrt{\epsilon})/\ln(\sqrt{\epsilon})&\text{if}\;\;\epsilon\leq r\leq \sqrt{\epsilon}\\[5pt]
    1&\text{if}\;\;\sqrt{\epsilon}\leq r.
\end{cases}\]
Then we have
\begin{equation}\label{limitsofue}
    \int_{\Omega}|\nabla u_{\epsilon}|^2\to 0\;\;\text{and}\;\;\int_{\Omega}|u_{\epsilon}|^2\to |\Omega|\;\;\text{as}\;\;\epsilon\to0.
\end{equation}
If $D\subseteq \partial\Omega$ has sufficiently small diameter, we may apply an isometry so that $D$ is contained in the set $\{0\leq r\leq \epsilon\}$. By (\ref{limitsofue}), for $\epsilon$ sufficiently small, the first mixed eigenvalue is less than $1/4$. Since $\partial\Omega$ is compact, we may find a uniform $\epsilon$ that gives this estimate for any connected subset $D$ that has diameter less than $\epsilon$.

\printbibliography

\end{document}